\documentclass{amsart}

%% preamble

\usepackage{amssymb,latexsym} % 
\theoremstyle{plain}
\newtheorem{theorem}{Theorem}
\newtheorem{lemma}{Lemma}

\theoremstyle{definition}
\newtheorem{definition}{Definition}

\theoremstyle{remark}

\numberwithin{equation}{section}
\begin{document}
\title[Yang Mills Dirac]{Removable Point Singularities for the Yang Mills Dirac Equations in Two Dimensions}
\author{Penny Smith}
\curraddr{
Department of Mathematics Lehigh University 14 E. Packer Ave, Bethehem Pa. 18015 }
\email{
ps02@lehigh.edu }
\date{
September 15, 2017
}
\today

\subjclass[2010]{Primary 05C38, 15A15; Secondary 05A15, 15A18}
\keywords{Keyword one, keyword two, keyword three}
\dedicatory{Dedicated to the memory of Tom Otway, who was our collaborator on this project, and died tragically in 2016}
\begin{abstract}
In \cite{PS90}, we proved a removable point singularity theorem for the coupled Yang-Mills Higgs Equations over a two dimensional base manifold. Here, we prove a similar theorem for the coupled Yang Mills Dirac Equations.
\end{abstract}
\maketitle
\section{Introduction}\label{S:Introduction}
In \cite{PS90}, a removable point singularities theorem was proved for static Yang Mills Higgs fields in two dimensions. A different system of nonlinear partial differential equations occurs if one replaces the Higgs Field, which is a scalar field, with a spinor field. Whereas the Higgs Field represents the field induced by a Boson, the spinor field represents the field induced by a fermion. Elements $\varphi $ of the fermion field are eigenvectors of the Dirac operator $DJ $ , and the mass $m$ of the particle is represented by the corresponding eigenvalue. Geometrically these eigenvectors are sections of twisted spin bundles and their eigenvalues are sections of trivial real line bundles.

In dimension two, the Dirac equation 

\begin{equation}\label{E:Dirac Equation}
DJ \varphi  =m\varphi 
\end{equation}

reduces to the eigenvalue equation for the twisted Cauchy Riemann operator. The non--triviality of the bundle introduces a curvature term into the second order equations

\begin{equation}\label{E:second order equations}
DJ ^{2}\varphi  =\underline{D} \ast \underline{D}\varphi  +\frac{1}{2}e^{i}\bullet e^{j}\bullet F_{ij}\left (\varphi \right ) =m\varphi 
\end{equation} 

   Here, the Lie algebra valued two-form $F$ is a local representation of the curvature of the vector bundle $E$ ; $\varphi $ is a smooth section of $V \otimes E$, where $V$ is a four dimensional vector space isomorphic to the Clifford algebra of $E$ ; $\left [e_{i}\right ]$ is an orthonormal basis on the Euclidean base space $\Omega $ ; $\underline{D}$ is the spinor total covariant derivative.

Detailed discussions of the Dirac equation, and of equation \eqref{E:second order equations}  are given in \cite{TP82}. An outline also occurs in \cite{TO87}. In two dimensions $\varphi $ can be considered a complex-valued section.

Equation \eqref{E:second order equations} is coupled to the system

\begin{equation}\label{E:J equation}
D \ast F =J\left(\varphi \right) 
\end{equation}

, where $D$ is the exterior covariant derivative ; $ \ast  :\Lambda ^{p} \rightarrow \Lambda ^{n -p}$ is the Hodge involution ; the current $J$ is the 1-form given by

\begin{equation}\label{E:definecurrent}
J\left (\varphi \right ) = -\frac{1}{2}\left \langle \varphi  ,e^{i}\bullet e^{j}\bullet \rho \left (\sigma ^{\alpha }\right )\phi \right \rangle \sigma _{\alpha } \otimes e_{i}
\end{equation}

In equation \eqref{E:definecurrent} $\varrho $ is a unitary representation of the gauge group $G$ : the structure group of a principle bundle $P$ related to $E$ by $adE =P \times _{G} \mathcal{G}$ , and $\mathcal{G}$ is the Lie algebra of $G$  ; $\sigma ^{\alpha }$ is an orthonormal basis for the Lie Algebra, which is the fiber of $adE$. For details, see \cite{AJCT80}, and \cite{TP82}.

The problem is to remove a possible point singularity from an otherwise $C^{\infty }$ solution of equations \eqref{E:second order equations}, and \eqref{E:J equation}. Without loss of generality, we can take $\Omega $ to be a disc $D_{r}(0)$ , and take the singularity to lie at the origin. Our main ideas come from \cite{KKU82} , in which the problem is solved for a base space of dimension $n =4$ and $\phi  =0$. Modifications introduced in \cite{AJCT80}-\cite{PS90} and \cite{TO87} are also used. The main technical interest in dimension two lies in the fact that Serrin test function arguments fail, even for the particle field. But this dimension is interesting for other reasons as well. Since, we always assume that $F \in L^{\frac{n}{2}}$, the Holder inequality with $p =\frac{n}{2}$ and $q =\frac{n}{n -2}$ is used in the higher dimensional arguments in several contexts. This  technique also fails in two dimensions. There is no gauge in which we have an a--priori estimate on $\Vert A\Vert _{H^{1 ,2}}$ in terms of a ( finite)  curvature norm. Finally, the Yang Mills Dirac problem, like the corresponding scalar problem, has a topological obstruction to smoothness in the nontrivial holonomy of the punctured disc. Thus, we need a condition on the bundle connection $\Gamma $ ( this idea was introduced in \cite{PS90}). 

Let the loop $\underset{ -}{l} \colon
[0 ,1] \rightarrow S_{R}^{1}$ be    
given explicitly by $\underset{ -}{l_{R}} :t \rightarrow \left(R\cos 2 \pi t , R\sin 2 \pi t \right)$ , where $S_{R}^{1} = \{x \in (\mathbb{R})^{2} \mid \Vert x \Vert = R\}$. Then, for a chosen point $p$ on the fiber over $(R ,0)$ there is a unique $\Gamma $- horizontal lift of $\underset{ -}{l}_{R}$. Faithful right action by elements of $G$ induces parallel transport on this lift. The particular group element that corresponds to transport of the point $p$ around $\underset{ -}{l}_{R}$ will be denoted by $g(R)$. We represent the path $C_{R}$ by the map $C_{R} :(0 ,4) \rightarrow G$ , corresponding to $R \mapsto G\left(R\right)$ and consider the elements $g(R)$ of $G$ as points on the carrier of the path $C_{R}$. If  $lim_{R \rightarrow 0} g(R) = Id$  we say that $\Gamma $ satisfies a Holonomy condition denoted by Condition
$ H$.

As in Theorem 1.1 of \cite{PS90}, we have
\begin{theorem} \label{T:Halternate}
The following is equivalent to Condition H:
There exists a trivialization over a small ball $B_{R_{0}
-\{0 \} }
$, $\exists R_{0}$ , $0 \leq R_{0} \leq 4 $, centered at the origin, in which the connection defines a local covariant derivative 
\begin{equation}
D= d+A 
\end{equation}
\begin{equation}
A=A_{r}(r, \theta) +A_{\theta}(r,\theta) d\theta
\end{equation}
\text{, with} 
\begin{equation}
A_{r}(r, \theta) \, ,\, A_{\theta}(r,\theta)
\in \Gamma( \mathcal{G} \otimes T^{*}(B_{R_{0} -{0} } ) )
\end{equation}
\text{and, with}
\begin{equation}
lim_{r \rightarrow 0} A_{\theta}(r,\theta) =0 
\end{equation}
\text{,with the limit taken in the sup-norm topology on}
G
\end{theorem}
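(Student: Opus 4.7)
The plan is to establish both implications using a radial-gauge construction and working out the holonomy explicitly as a path-ordered exponential around each circle $S^1_r$. The easy direction is immediate: in a trivialization of the form asserted, horizontal lifts of $\underset{-}{l}_R$ depend only on $A_\theta$ because the loop has constant $r$, so
\begin{equation}
g(R) \;=\; \mathcal{P}\exp\!\left(-\int_0^{2\pi} A_\theta(R,\theta)\, d\theta\right),
\end{equation}
and uniform convergence $A_\theta(R,\cdot)\to 0$ forces $g(R)\to Id$.

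For the harder forward direction, I first reduce to a radial gauge and then remove the $\theta$-dependence of $A_\theta$ by a second gauge transformation. Choose $R_0$ small enough that $g(R)$ lies in a convex normal neighborhood of $Id$ for all $R\in(0,R_0]$ (allowed by Condition H) and that the restricted bundle, whose isomorphism class on $B_{R_0}\setminus\{0\}\simeq S^1$ is detected by $\pi_0(G)$, is trivializable. Radial parallel transport from a chosen fiber point at $(R_0,0)$ yields a trivialization with $A_r \equiv 0$ and with holonomy of the angular one-form around $S^1_r$ equal to $g(r)$. Now set $X(r) = -\tfrac{1}{2\pi}\log g(r)$; this is smooth on $(0,R_0]$, tends to $0$, and satisfies $\exp(2\pi X(r)) = g(r)^{-1}$. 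Solve the linear ODE
\begin{equation}
\partial_\theta u(r,\theta) \;=\; -A_\theta(r,\theta)\, u(r,\theta) \;+\; u(r,\theta)\, X(r), \qquad u(r,0) = Id.
\end{equation}
The substitution $u = v\exp(\theta X(r))$ reduces this to the parallel-transport ODE for $v$, whence $u(r,2\pi) = g(r)\exp(2\pi X(r)) = Id$, so $u$ descends to a smooth gauge transformation on the punctured ball. In the new trivialization $A_\theta^{\mathrm{new}}(r,\theta) = X(r)$, which tends to $0$ uniformly in $\theta$ as $r\to 0$, giving the theorem.

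The main obstacle I anticipate is controlling the regularity of $u(r,\theta)$ and hence of the new connection as $r$ varies. Smoothness in $\theta$ is provided by the ODE itself; smoothness in $r$ requires smoothness of $g(r)$ on $(0,R_0]$ (inherited from the connection being smooth on the punctured ball) together with smooth parameter dependence for linear ODEs, which then delivers $A_r^{\mathrm{new}} = u^{-1}\partial_r u$ as a smooth section on $B_{R_0}\setminus\{0\}$. A subsidiary point is well-definedness of the branch of $\log g(r)$, which is automatic inside the chosen convex normal neighborhood of $Id$. No issue arises at $r=0$ itself, since the theorem only claims smoothness of the transformed $A$ on the \emph{punctured} ball, and it is precisely the boundary behavior $A_\theta^{\mathrm{new}}\to 0$ that is the content of the conclusion.
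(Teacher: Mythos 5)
Your proof is correct in outline, and it takes a genuinely different route from the paper's. The paper does not argue Theorem \ref{T:Halternate} from scratch: it invokes Theorem 1.1 of \cite{PS90} and devotes the appendix to repairing the one estimate in that proof that was in error, namely the decay of the holonomy derivative, which is obtained there analytically from Karcher's formula $g(r_2)-g(r)=\int_r^{r_2}\int_0^{2\pi}F_{\rho,\theta}\,\rho\,d\theta\,d\rho$ together with a H\"older/Sobolev estimate on $f(\rho)=\int_0^{2\pi}\vert F_{\rho,\theta}\vert\,d\theta$ under the hypothesis $F\in W^{1,p}$, $1<p<3/2$. Your argument is instead purely gauge-theoretic: a radial gauge to kill $A_r$, followed by the twisted transport $u=v\exp(\theta X(r))$ with $X(r)=-\tfrac{1}{2\pi}\log g(r)$, which replaces $A_\theta$ by the $\theta$-independent quantity $X(r)$ whose decay is literally Condition H. This buys you the equivalence with no integrability hypothesis on $F$ at all; what the paper's curvature-based route buys in exchange is quantitative decay of $A_\theta$ in terms of curvature norms, which is what is actually consumed downstream in the removable-singularity argument. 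Two points you should tighten: (i) radial parallel transport from the single fiber point over $(R_0,0)$ trivializes the bundle only over a ray, so you must first fix a smooth trivialization over the circle $S^1_{R_0}$ (available once $g(R)$ lies in the identity component of $G$, which Condition H guarantees for small $R_0$) and then transport inward along rays; and (ii) the holonomy you feed into $X(r)$ should be the path-ordered exponential computed in that radial gauge, which is only conjugate to the $g(R)$ of Condition H as defined via an arbitrary choice of base point $p$; for compact $G$ the limit $g(R)\rightarrow Id$ is preserved under such conjugation, but this deserves an explicit sentence.
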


There is an arithmetic error in the estimate of line -9 page 512 of \cite{PS90}.
In the appendix of the current paper, we use an argument of Karen Uhlenbeck to obtain the estimate of line -9 page 512 of \cite{PS90}.

We now state our main result
\begin{theorem}\label{T:main theorem}
Let $ (F, \, \phi) $ be a $C^{\infty}$ solution of the (second order) Yang-Mills-Dirac equations \eqref{E:second order equations}, and \eqref{E:J equation}, in the punctured ball
$B_{0} (1)- {0}$. Suppose $F \in L^{1}(B_{0} (1)) $, $\phi \in H^{1,2}(B_{0} (1)- {0})$, and suppose that the bundle connection satisfies Condition H. 
Then, $(F, \phi) $ is equivalent via a continuous gauge transformation to to a solution which is $C^{\infty} $ in all of the ball $ B_{0} (1)$.
\end{theorem}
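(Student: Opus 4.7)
The plan is to mimic the strategy of \cite{PS90}, substituting the Weitzenb\"ock identity \eqref{E:second order equations} for the spinor in place of the scalar Higgs energy identity, and replacing the now inapplicable Serrin test function argument with a polar coordinate bootstrap that exploits the good gauge guaranteed by Condition H together with a Brezis--Merle type borderline estimate in two dimensions.

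First I would apply Theorem \ref{T:Halternate} to pass to a trivialization on $B_{R_0}\setminus\{0\}$ in which the connection one-form has the polar decomposition $A=A_r\,dr+A_\theta\,d\theta$ with $\lim_{r\to 0}\|A_\theta(r,\cdot)\|_\infty=0$. Since the conclusion allows any continuous gauge transformation, I may further normalize by a radial gauge (integrating $A_r$ out along rays from the boundary circle) to arrange $A_r\equiv 0$, leaving only the angular component $A_\theta$, which tends to $0$ uniformly on fibre circles as $r\to 0$. In this gauge the principal part of the Dirac operator $\underline D$ is, up to a term of order $|A_\theta|/r$ that vanishes in the limit, the flat twisted Cauchy--Riemann operator on the punctured disc.

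Next I would establish the energy identity for $\phi$ by pairing the second order Dirac equation \eqref{E:second order equations} with $\phi$ against cutoffs $\eta_\varepsilon$ supported in an annulus $\{\varepsilon<r<R_0\}$. The curvature term $\tfrac12 e^i\bullet e^j\bullet F_{ij}(\phi)$ is bounded pointwise by $C|F||\phi|$, and since $F\in L^1$ and $\phi\in H^{1,2}\subset\exp L$ by Moser--Trudinger, the coupling term is in $L^1$. A Brezis--Merle type argument applied to $\Delta|\phi|^2\geq -C(|F|+m)|\phi|^2-|\nabla\phi|^2$ in the good gauge then upgrades $\phi$ to $L^\infty_{\mathrm{loc}}$ across the puncture; combined with the first order equation \eqref{E:Dirac Equation} and standard elliptic regularity in the fixed gauge this gives $\phi\in C^\alpha$ and then $\phi\in C^\infty$ through the origin. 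For the curvature, I would then invoke the Uhlenbeck-type estimate recalled in the appendix to control $\|A\|_{L^p}$ for some $p>2$ on annuli shrinking to the origin, and plug the now H\"older continuous current $J(\phi)$ back into $D\ast F=J(\phi)$ to obtain an elliptic system for $A$ whose right-hand side lies in $L^q$ for every finite $q$. Bootstrapping propagates smoothness of $A$ to the puncture, and the gauge transformation produced is continuous by construction of the radial gauge.

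The main obstacle is precisely the two-dimensional borderline regularity step that replaces Serrin's argument: with $F$ only in $L^1$ and no a priori $H^{1,2}$ estimate on $A$, the usual iteration scheme based on the H\"older pair $(n/2,n/(n-2))$ collapses, and I must rely on the exponential integrability afforded by $H^{1,2}\hookrightarrow\exp L$ in two dimensions together with the Brezis--Merle estimate to close the loop. A secondary technical point is matching the Condition H trivialization to the original smooth trivialization on the punctured disc by a transition function that extends continuously to the origin; this continuity relies on the corrected estimate of the appendix.
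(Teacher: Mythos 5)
Your overall architecture (fix a good gauge, bound $\phi$, then bootstrap $A$) superficially matches the paper's, but there is a genuine gap at the central analytic step: you attempt to bound $\phi$ using only the hypothesis $F\in L^{1}$, via Moser--Trudinger and a Brezis--Merle type estimate. This cannot work. The Brezis--Merle inequality for $\Delta u=f$ with $f\in L^{1}$ in two dimensions yields only exponential integrability of $u$, never $L^{\infty}_{\mathrm{loc}}$; indeed $w=\log\log(e/|x|)$ lies in $H^{1,2}$ near the origin, is unbounded, and satisfies $\Delta w+fw\ge 0$ for a suitable nonnegative $f\in L^{1}$, so no subelliptic iteration with a coefficient that is merely $L^{1}$ can produce boundedness. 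Likewise the pairing of $F\in L^{1}$ with $|\phi|^{2}\in \exp L$ does not give $F|\phi|^{2}\in L^{1}$, since the dual of $\exp L$ is $L\log L$, not $L^{1}$. The paper's essential technical contribution, which your proposal omits entirely, is the lemma asserting $F\in L^{1+\delta}$ for some $\delta>0$: this follows from a pointwise decay estimate $r^{2-\delta'}|F|\le C$ on circles, obtained by combining a two-dimensional Morrey-type mean value inequality for $h=|F|+m^{2}$ with a scaling argument and Uhlenbeck's broken Hodge gauges, which together yield the differential inequality $f(R)\le C(Rf'(R)+R^{2})$ for $f(R)=\int_{0<|y|\le R}|y|^{2}|F|^{2}\,dy$ and hence $f(r)\le C'r^{1/c}$. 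Only after $F\in L^{1+\delta}$ is in hand does H\"older's inequality against $\phi\in L^{p}$ (all $p<\infty$) make the coupling term integrable, and only then does the Morrey argument give $\phi\in L^{\infty}_{\mathrm{loc}}$ and, subsequently, the Coulomb-gauge bound $\|A\|_{H^{1,p}}\le C\|F\|_{L^{p}}$ with $p>n/2$ that drives the elliptic bootstrap for $A$. Your radial-gauge normalization from Condition H is not a substitute for this curvature improvement.

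A secondary omission: to extend the weak form of the subelliptic inequality for $|\phi|$ across the puncture one needs cutoffs whose Dirichlet energy vanishes in the limit; the paper uses the logarithmic cutoffs $\bar\eta_{k}$ of its Lemma 3, for which $\int|\nabla\bar\eta_{k}|^{2}\to 0$, whereas your unspecified annular cutoffs $\eta_{\varepsilon}$ would, if taken to be the usual linear truncations, have Dirichlet energy bounded away from zero in two dimensions and the limiting argument would not close.
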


That the condition on $\phi$ is optimal can be seen by considering the special case of a flat connection and small particle mass.
As $m$ tends to zero, equations \eqref{E:second order equations} acquires a fundamental solution $ \phi = ln \vert x \vert $, which just fails to be in $ H^{1,2}(B_{0} (1))$. The condition
$F \in L^{n/2}(B_{0} (1))$ is known to be the natural one for the curvature. (See [\cite{KKU82}, \cite{TP82}--\cite{PS90}, \cite{TO87}]. An alternative condition as in \cite{TOLS87} is actually stronger than the standard one in dimensions two and three.
\section{proof of main theorem}\label{S:proof of main theorem}
In this section, we prove \eqref{T:main theorem}.
We denote by C, dimensional constants which may change from line to line.
\begin{theorem}\label{T:LP bound for phi}
The Dirac field satisfies $\phi \in L^{p} $, for any finite $p$
\end{theorem}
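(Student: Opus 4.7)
The natural approach in the critical dimension $n=2$ is to combine point-singularity removability for $H^{1,2}$ with the critical Sobolev embedding; the Dirac equation is not needed for this first step of the bootstrap.

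First, I would promote the hypothesis $\phi\in H^{1,2}(B_{0}(1)\setminus\{0\})$ to $\phi\in H^{1,2}(B_{0}(1))$ by showing the origin is $H^{1,2}$-removable. In $\mathbb{R}^{2}$ a point has zero $(1,2)$-capacity; the standard witness is the logarithmic cutoff $\eta_{\varepsilon}$ equal to $0$ on $B_{\varepsilon}(0)$, equal to $1$ off $B_{\sqrt{\varepsilon}}(0)$, interpolating radially by $\log$, with $\|\nabla\eta_{\varepsilon}\|_{L^{2}}^{2}=O(1/|\log\varepsilon|)\to 0$. For $\psi\in C_{c}^{\infty}(B_{0}(1))$, inserting $\eta_{\varepsilon}\psi\in C_{c}^{\infty}(B_{0}(1)\setminus\{0\})$ in the weak-derivative identity on the punctured disc gives
\[
\int\phi(\partial_{i}\eta_{\varepsilon})\psi+\int\phi\,\eta_{\varepsilon}(\partial_{i}\psi) = -\int(\partial_{i}\phi)\eta_{\varepsilon}\psi,
\]
and each term passes to the correct limit as $\varepsilon\to 0$: the first by Cauchy--Schwarz (since $\phi\psi\in L^{2}$ and $\nabla\eta_{\varepsilon}\to 0$ in $L^{2}$), the other two by dominated convergence. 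Hence the distributional derivatives of $\phi$ on all of $B_{0}(1)$ agree with those on the punctured disc and lie in $L^{2}$.

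Second, I would invoke the two-dimensional Sobolev embedding $H^{1,2}(B_{0}(1))\hookrightarrow L^{p}(B_{0}(1))$, valid continuously for every $p\in[1,\infty)$. This can be extracted from Gagliardo--Nirenberg, or more sharply from the Trudinger--Moser inequality, which even yields $\int\exp(c|\phi|^{2}/\|\nabla\phi\|_{L^{2}}^{2})\,dx<\infty$. Either way, $\phi\in L^{p}$ for every finite $p$ follows at once.

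The only subtle point is the removability step: because $p=n=2$ is the critical Sobolev exponent, a linear cutoff would have unbounded $L^{2}$-gradient, so one is forced to use the logarithmic cutoff above. Once that is in hand, no use is made of \eqref{E:second order equations} or \eqref{E:J equation}; the conclusion is a pure consequence of the a priori regularity $\phi\in H^{1,2}$ on the punctured disc, and serves as the input for the later elliptic bootstrap that will lift $\phi$ to $C^{\infty}$.
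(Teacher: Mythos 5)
Your proof is correct, but it takes a genuinely different route from the paper's. The paper never works at the critical exponent: it notes that $\phi\in H^{1,2}(B_{0}(1)\setminus\{0\})$ implies $\phi\in H^{1,2-\epsilon}$ for every small $\epsilon>0$ (the domain being bounded), applies the \emph{subcritical} embedding $H^{1,2-\epsilon}\hookrightarrow L^{(4-2\epsilon)/\epsilon}$, and lets $\epsilon\to 0$ to reach every finite $p$; at subcritical exponents the puncture is harmless, so no capacity argument is invoked. You instead stay at $p=2$: you first remove the puncture using the vanishing $(1,2)$-capacity of a point in $\mathbb{R}^{2}$ (logarithmic cutoff), obtaining the strictly stronger conclusion $\phi\in H^{1,2}(B_{0}(1))$ across the origin, and then apply the critical embedding. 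Both arguments are sound, and neither uses the field equations. Your version buys more: membership of $\phi$ in $H^{1,2}$ of the full ball is precisely the hypothesis $w\in H^{1,2}(B(1))$ needed later in Lemma~\ref{L:subellipticlemma} (with $w=\vert\phi\vert$), and your cutoff $\eta_{\varepsilon}$ is the same device as the $\bar\eta_{k}$ used there, so your removability step is a down payment on that lemma rather than wasted work. The paper's version is shorter here and avoids the capacity computation, at the cost of not upgrading $\phi$ across the origin at this stage. One minor point on your side: $\eta_{\varepsilon}\psi$ is only Lipschitz, not $C^{\infty}$, so you should either mollify it or remark that compactly supported Lipschitz functions on the punctured disc are admissible test functions by density.
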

\begin{proof}
By hypothesis, $ \phi \in H^{1, 2-\epsilon} (B_{0} (1)- {0} )$ for any positive $\epsilon < 2$.
 Let $ D^{'}- {0}$ be any punctured disk, concentric with (with center the deleted origin), and strictly contained in $ \, B_{0}(1)- {0}$, and define 
 \[
 \tilde{\phi}=
 \begin{cases}
 \phi, &\text{for $x \in 	D^{'}- {0} $ } \\
 0, & \text{ for $ x \in R^{2} - \{D^{'}- {0} \} $ }
 \end{cases}
\]
The Sobolev embedding theorem gives
\[
H^{1, (2-\epsilon)} (D^{'}- {0} ) \hookrightarrow L^{ (4-2\epsilon)/\epsilon} (D^{'}- {0} )
\]
Choosing $\epsilon$ arbitrarily small, we obtain $\phi$ and thus
$ \tilde{\phi}$ into an arbitrarily high $L^{p}$ space, but not into $L^{\infty}$.
\end{proof}

\begin{lemma}\label{L:LPboundoncurvature}
The curvature $F$ is in $L^{1+\delta}(D^{'}- {0} ) $, for some
$\delta > 0$.
\end{lemma}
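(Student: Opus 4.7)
The plan is to derive a pointwise Kato-type inequality for $|{\ast}F|$ from \eqref{E:J equation}, and then combine it with the high integrability of $\phi$ (hence of $J$) obtained in Theorem \ref{T:LP bound for phi}. Since $J(\phi)$ defined in \eqref{E:definecurrent} is bilinear in $\phi$, that previous theorem gives $J \in L^{p}(D' - \{0\})$ for every finite $p$.

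Set $u = {\ast}F$, a smooth Lie-algebra-valued function on $D' - \{0\}$ with $|u| \in L^{1}$. In any local trivialization, \eqref{E:J equation} reads coordinate-wise as $\partial_{i} u + [A_{i}, u] = J_{i}$. The key observation is that $\mathrm{Ad}$-invariance of the inner product on $\mathcal{G}$ forces $\langle [A_{i}, u], u\rangle$ to vanish, so pairing with $u$ gives
\[
\tfrac{1}{2}\partial_{i}|u|^{2} \;=\; \langle J_{i}, u\rangle,
\]
whence the gauge-invariant pointwise bound $|\nabla |u|| \leq |J|$ a.e.\ on $D' - \{0\}$. In particular, $|\nabla|u|| \in L^{p}$ for every finite $p$, and this derivation uses nothing from Condition H.

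To upgrade $|u|\in L^{1}$ to $|u|\in L^{1+\delta}$, I would pass through the angular average $m(r) = \frac{1}{2\pi}\int_{0}^{2\pi}|u|(r,\theta)\,d\theta$ and its companion $m_{J}(r) = \frac{1}{2\pi}\int_{0}^{2\pi}|J|(r,\theta)\,d\theta$. From the above inequality, $|m'(r)| \leq m_{J}(r)$, and H\"older against the $1/r$ polar weight (possible because $J\in L^{p}$ for some $p>2$) shows $\int_{0}^{R_{0}} m_{J}(r)\,dr < \infty$, so $\sup_{r} m(r) =: M < \infty$. On each circle $\partial B_{r}$, the one-dimensional Poincar\'e inequality, combined with the tangential estimate $|\partial_{\theta}|u|| \leq r|J|$, yields the pointwise bound $|u|(r,\theta) \leq M + 2\pi r\, m_{J}(r)$. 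Raising this to the $(1+\delta)$-th power, integrating in polar coordinates, and applying Jensen to the $m_{J}^{1+\delta}$-contribution reduces the whole thing to a multiple of $\|J\|_{L^{1+\delta}(D)}^{1+\delta}$, which is finite.

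The main obstacle is this last step. The natural route would be the Sobolev embedding $W^{1,p}\hookrightarrow L^{\infty}$ for $p>2$ applied directly to $|u|$, but this is obstructed on the punctured disk: single points have positive $W^{1,p}$-capacity for $p>2$, and the Sobolev constants degenerate as one approaches the origin. The angular-average device sidesteps this by trading the two-dimensional embedding for a one-dimensional Poincar\'e estimate on circles, at the price of a weighted H\"older estimate that is paid for by the fact that $J$ lies in every $L^{p}$.
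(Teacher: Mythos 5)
Your proposal is correct in its essentials, but it follows a genuinely different route from the paper. The paper never touches the first--order current equation directly: it passes to the second--order subelliptic inequality $\triangle h + C(|F| + |\phi||\underline{D}\phi|)h \geq 0$ for $h = |F| + m^{2}$, applies a two--dimensional Morrey/mean--value estimate on annuli, rescales, invokes Uhlenbeck's broken Hodge gauges to get a differential inequality $f(R) \leq C(Rf'(R) + R^{2})$ for $f(R) = \int_{|y|\leq R}|y|^{2}|F|^{2}$, integrates that to obtain the decay $r^{2-\delta'}|F| \leq C$ on circles, and then integrates in polar coordinates. You instead exploit the two--dimensional accident that ${\ast}F$ is a $0$--form, so that \eqref{E:J equation} is a first--order equation for the scalar $|{\ast}F|$; the $\mathrm{Ad}$--invariance computation giving $|\nabla|F|| \leq |J|$ is correct, and since $|J| \leq C|\phi|^{2}$ lies in every $L^{p}$ by Theorem \ref{T:LP bound for phi}, your angular--average and circle--wise Poincar\'e argument closes. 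Your route is more elementary (no broken Hodge gauges, no Morrey iteration) and actually proves more --- $F \in L^{p}$ for every finite $p$ --- while the paper's machinery is the one that generalizes beyond dimension two and yields the explicit pointwise decay rate, which has independent value. Two small cautions: $|{\ast}F|$ is only locally Lipschitz where $F$ vanishes, so the pointwise gradient inequality and the differentiations of $m(r)$ should be read a.e.\ (harmless for Lipschitz functions restricted to radii and circles, and one should pick a radius $R_{0}$ with $m(R_{0}) < \infty$, which holds for a.e.\ $R_{0}$ since $F \in L^{1}$); and your capacity worry is something of a red herring --- since $|F| \in L^{1}(D')$ and $\nabla|F| \in L^{1}(D')$, a single point has zero $\mathcal{H}^{1}$--measure and is removable for $W^{1,1}$, so one could alternatively bootstrap through whole--disk Sobolev embeddings rather than the angular--average device.
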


\begin{proof}
Define $h= \vert F \vert + m^{2}$ to obtain \cite{TO87}
\[
\triangle h +C (\vert F \vert + \vert \phi \vert \underbar{D} \phi
\vert ) h \geq 0
\]
An improvement of Morrey's theorem for dimension two (cf. Theorem
8.1 of \cite{PS90}, \cite{EB80}, and \cite{NT81} ) gives
\[
\vert h(y) \vert \leq C \Vert h \Vert_{L^{1} ( V_{1/4})}
\]
,where $ V_{1/4} = \{ 1/4 \leq \vert y \vert \leq 3/4 \} $.
Scaling by change of variables as [7], we obtain
\[
r^{2} \vert h(x) \vert \leq C \Vert h \Vert_{L^{1}( V_{r/4}) }
\quad 0 <r <1
\]
(See chapter 7 of \cite{DFKU84} for a general discussion of this technique.)
Thus, on $\vert x \vert =r $, we have

\begin{equation}\label{E:pointwise estimate on F}
r^{2} \vert F \vert \leq C \int h 
\end{equation}

Using a delicate geometric argument, which is now standard in gauge theory, (Uhlenbeck's broken Hodge gauges \cite{KKU82}), we find that
\[
\int_{0 < \vert x \vert \leq 1 } \vert x \vert^{2} \vert F(x) \vert ^{2} \, dx \leq \int_{ \vert x \vert =1 } \vert x \vert^{2} \vert F(x) \vert ^{2} \, dx + R^{4} \int_{0 <\vert x\vert \leq 1}
\vert x \vert^{2} \vert \phi \vert ^{4} \, dx
\]
Scaling again by change of variables, gives for $ 0 < R \leq 1 $, the integral estimate

\begin{equation}\label{E:scaled estimate on integrated curvature}
\int_{0 < \vert y \vert \leq R} \vert y \vert^{2} \vert F(y) \vert^{2} \, dy \leq C (R^{3} R^{-2} ) \int_{ \vert y \vert =
R} \vert y \vert^{2} \vert F(y) \vert^{2} \, dS_{y} + R^{4} \int_{0< \vert y \vert \leq R} \vert \phi \vert^{4} \, dy
\end{equation}

In this estimate, we used the fact that $\phi $ has conformal weight $3/2$ under scale transformations \cite{TP82}, \cite{TO87}. Let
\begin{equation}\label{E:definition of f(R)}
f(R) = \int_{0 < \vert y | \leq R} \vert y \vert^{2} \vert F(y) \vert^{2} \, dy
\end{equation}
Then \eqref{E:scaled estimate on integrated curvature} 
is just 
\[
f(R) \leq C(R f^{'}(R) + R^{4}) \leq C(R f^{'}(R) + R^{2})
\]
Integrating, we find that

\begin{equation}\label{E:integralestimateonF}
\int_{0< \vert x \vert \leq r}  \vert x 
\vert^{2} \vert F(x) \vert^{2} \, dx
\leq  C^{'} r^{1/c} 
\end{equation}
 \quad \text{for} $ c \geq 1 $.
\quad
Combining inequalities \eqref{E:pointwise estimate on F} and \eqref{E:integralestimateonF}, we obtain on the annulus $ V_{r} = \{ r/2 \leq \vert x \vert \leq 2r \} $
, the estimate
\[
\begin{split}
r^{2} \vert F(x) \vert_{ \vert x \vert = r} \leq C \int_{V_{r}}
\vert F(x) + m^{2} \, dx \leq \\
C \left([\int_{V_{r}} 
 \vert x \vert^{-2} \,dx ]^{1/2}% [ \int_{V_{r}} 
 \vert  x \vert^{2} \vert F(x) \vert^{2} \, dx ]^{1/2} + m^{2} r^{2} \right] \leq \\
C\left( [ \int_{0}^{\pi} \int_{r/2}^{r} \vert x \vert^{-2} \vert x \vert \, d\vert x \vert d\theta]^{1/2} [\int_{V_{r}} \vert x \vert^{2} \vert F(x) \vert^{2} \, dx ]^{1/2} +m^{2} r^{2} ] \right) \leq \\
C r^{1/c}
\end{split}
\]

 where, $0 < r \leq 1$, \quad $ c \geq 1 $.
 Thus, we have for $ 0< \delta^{'} \leq 1 $,
 \[
 r^{2-\delta^{'} }\vert F(x) \vert \leq C 
 \]
 , on the circle $\vert x \vert \leq r $
 
 Integrating in polar coordinates gives the result for any 
 $ \delta < \frac{\delta{'}}{2-\delta^{'}}$.
 \end{proof}
 \begin{lemma}\label{L:subellipticlemma}
 Let $w \in C^{\infty}(B(1)- \{0\} ) \cap H^{1,2}(B(1))$ satisfy the
 sub--elliptic inequality
 \begin{equation}\label{E:subellipticinequality}
 \triangle w + f w \geq 0 \quad , w \geq 0
  \end{equation}
  for $ f \in L^{ 1+ \delta}$, where $ \delta \geq 0 $, and $ w \in L^{p}(B(1)$,
  for all $p < \infty$,
  Then,
  $w$ satisfies
  \begin{equation}\label{E:weaksubellipticinequality}
  \int_{B(1)} (\nabla w  \nabla \eta -f w \eta ) \, dx \leq 0
   \end{equation}
   , for any non--negative $\eta \in C^{\infty}_{0} ( B(1))$. In particular,
   $\eta$ need not vanish in a neighborhood of the origin.
   \end{lemma}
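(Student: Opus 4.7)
The strategy is to multiply the classical inequality (valid on $B(1)\setminus\{0\}$) by a test function $\eta\,\chi_\epsilon$, where $\chi_\epsilon$ is a radial cutoff supported away from the origin, integrate by parts on the punctured disc (where everything is smooth), and then send $\epsilon\downarrow 0$. The point is that all three resulting terms converge to the desired ones: the hypotheses $w\in H^{1,2}(B(1))$ and $w\in L^p$ for every finite $p$ are calibrated exactly so that the cutoff-gradient error vanishes in the limit, even though no gauge change or Serrin-type test-function trick is available in two dimensions.

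Concretely, pick $\chi\in C^\infty(\mathbb{R})$ with $\chi(t)=0$ for $t\le 1$, $\chi(t)=1$ for $t\ge 2$, and set $\chi_\epsilon(x)=\chi(|x|/\epsilon)$. Then $\eta\,\chi_\epsilon\in C_0^\infty(B(1)\setminus\{0\})$, so multiplying \eqref{E:subellipticinequality} by it and integrating by parts classically gives
$$\int_{B(1)}\chi_\epsilon\,\nabla w\cdot\nabla\eta\,dx\;+\;\int_{B(1)}\eta\,\nabla w\cdot\nabla\chi_\epsilon\,dx\;-\;\int_{B(1)}f\,w\,\eta\,\chi_\epsilon\,dx\;\le\;0.$$
For the first term, dominated convergence applies with envelope $|\nabla w|\,\|\nabla\eta\|_\infty\in L^1$, yielding $\int\nabla w\cdot\nabla\eta\,dx$. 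For the potential term, Hölder with conjugate exponents $1+\delta$ and $(1+\delta)/\delta$ shows $f\,w\,\eta\in L^1$ using $f\in L^{1+\delta}$, $w\in L^{(1+\delta)/\delta}$, and $\eta\in L^\infty$, so dominated convergence again gives $\int f w\eta\,dx$.

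The delicate step, and in my view the only genuine obstacle, is the middle term involving $\nabla\chi_\epsilon$, whose support is the annulus $A_\epsilon=\{\epsilon\le|x|\le 2\epsilon\}$ on which $|\nabla\chi_\epsilon|\le C/\epsilon$. Cauchy–Schwarz together with the two-dimensional area bound $|A_\epsilon|\le C\epsilon^2$ gives
$$\left|\int_{B(1)}\eta\,\nabla w\cdot\nabla\chi_\epsilon\,dx\right|\;\le\;\|\eta\|_\infty\cdot\frac{C}{\epsilon}\cdot|A_\epsilon|^{1/2}\cdot\|\nabla w\|_{L^2(A_\epsilon)}\;\le\;C\,\|\eta\|_\infty\,\|\nabla w\|_{L^2(A_\epsilon)}.$$
Here the factor $1/\epsilon$ from the cutoff is cancelled precisely by the 2D area scaling $|A_\epsilon|^{1/2}\sim\epsilon$. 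Now the crucial use of the hypothesis $w\in H^{1,2}(B(1))$ (on the \emph{full} ball, not the punctured one) enters: $\nabla w\in L^2(B(1))$, so absolute continuity of the integral yields $\|\nabla w\|_{L^2(A_\epsilon)}\to 0$ as $\epsilon\to 0$. Combining the three limits proves \eqref{E:weaksubellipticinequality}, with the freedom to choose $\eta$ that does not vanish near the origin.
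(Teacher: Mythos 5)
Your proof is correct, but it kills the singular error term by a different mechanism than the paper. The paper uses the logarithmic cutoff $\bar\eta_k$ (equal to $0$ for $|x|\le\epsilon_k$, to $1$ for $|x|\ge 1$, interpolating logarithmically in between), whose defining virtue in two dimensions is that $\int|\nabla\bar\eta_k|^2\,dx\to 0$; the error term is then discarded wholesale by Cauchy--Schwarz against the \emph{global} bound $\|\nabla w\|_{L^2(B(1))}$, with no further information about $w$ needed. You instead use a hard annular cutoff $\chi(|x|/\epsilon)$, for which $\|\nabla\chi_\epsilon\|_{L^2(A_\epsilon)}$ is only \emph{bounded} (the $1/\epsilon$ exactly cancels the area factor $\epsilon$ in $n=2$ --- this is the borderline case), so you must additionally invoke the absolute continuity of the integral to get $\|\nabla w\|_{L^2(A_\epsilon)}\to 0$ on the shrinking annulus. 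Both arguments are instances of the fact that a point has zero $H^{1,2}$-capacity; yours trades the capacity-optimal cutoff for a more elementary one at the cost of one extra (standard) step, and you correctly identify that the factor-counting only barely closes in dimension two. Your treatment of the other two terms (dominated convergence for the gradient term, H\"older with exponents $1+\delta$ and $(1+\delta)/\delta$ for the potential term) matches the paper's. One shared caveat: the H\"older step genuinely requires $\delta>0$, whereas the lemma as stated allows $\delta\ge 0$; in the application $\delta>0$ is supplied by Lemma \ref{L:LPboundoncurvature}, so nothing is lost, but neither your argument nor the paper's covers the literal endpoint $\delta=0$.
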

   \begin{proof}
   Choose a sequence $\epsilon_{k} $ of positive numbers, for which $\epsilon_{k} \rightarrow 0$, as $ k \rightarrow \infty $.
   Consider the double test function $ \zeta ( (\bar \eta_{k}) )$, where $\eta
   \in C^{\infty}_{0} (B(1))$ is an arbitrary non--negative function, and
   \begin{equation}
   \bar \eta_{k} =
   \begin{cases}\label{E:definitionnetabarsubk}
   0, & \text{ for} \quad  \vert x \vert \leq \epsilon_{k} \\
   1 & \text{for} \quad  \vert x \vert \geq 1 \\
   \left[ \frac{1}{log ( \frac{1}{\epsilon_{k}})}
   \right] 
   \left[ \frac{1}{log (\frac{\vert x \vert }{\epsilon_{k}})}        \right]
   & \text{for} \quad \epsilon_{k} \leq \vert x \vert \leq 1
   \end{cases}
   \end{equation}
   , then $\zeta$ vanishes in a neighborhood of the origin, and \eqref{E:weaksubellipticinequality} implies that 
   \begin{equation}\label{E:subgradientineq}
   \int_{B(1))} \nabla w \nabla \zeta \, dx \leq \int_{B(1))} \vert f \vert w \zeta \, dx
   	\end{equation}
   	But,
   	\begin{equation}
   	\nabla \zeta = \eta (\nabla \bar{\eta}_{k} )	 + (\nabla \eta) \bar{\eta}_{k}
   	\end{equation}
   	And, by construction $ \bar{\eta}_{k} $ satisfies
   	\begin{equation}\label{E:etalimitswithk}
   	lim_{k \rightarrow \infty}  \bar{\eta}_{k}= 1 \text{ \quad a.e.}
   	\end{equation}
   	and,
   	\begin{equation}\label{E:nablaetalimitwithk}
   	lim_{k \rightarrow \infty} \nabla \bar{\eta}_{k} = 0. \text{ \quad a.e.}
   		\end{equation}
 We have
 \begin{equation}
 \begin{split}
 \int \nabla w \nabla \zeta 	\, dx &\leq \int (\nabla w) (\eta (\nabla \bar{\eta}_{k}) \, dx 
 + \int (\nabla w) (\nabla \eta) \bar{\eta}_{k}\\
 & \leq \int \vert f \vert w \eta \bar{\eta}_{k} \, dx
 \end{split}
 \end{equation}
 or,
 \begin{equation}
 \int (\nabla w ) (\nabla \eta) 	\bar{\eta}_{k} \, dx  \leq C \int \vert f \vert \eta \bar{\eta}_{k} \, dx + \int (\nabla w) \vert \eta \vert \vert \nabla 
 \bar{\eta}_{k} \vert \, dx
 \end{equation}
 
By Holder's inequality 
\begin{equation}
\int \vert f \vert  w \eta \bar{\eta}_{k} \, dx \leq
\left[\Vert f \eta \Vert_{ L^{1+ \delta}}\right]
\left[
\Vert w \bar{\eta}_{k} \Vert_{L^{\frac{1 +\delta}{\delta}   }}
\right] < \infty
\end{equation}
Also, for sufficiently large $k$,

\begin{equation}
\int \vert \nabla w 
\vert \eta \vert \nabla \bar{\eta}_{k} \vert \, dx \leq
C \Vert w \Vert_{H^{1,2}} \Vert \bar{\eta}_{k} \Vert_{H^{1,2}} =0
\end{equation}
Since,
\begin{equation}
\int \vert \nabla \bar{\eta}_{k} \vert^{2} \, dx \rightarrow 0 \quad \text{, as}
\quad k \rightarrow \infty
\end{equation}

This completes the proof of Lemma \eqref{L:subellipticlemma}.

Taking $ w = \vert \phi \vert $ , $ f = \vert F \vert + m^{2}$ ,
we find that \eqref{E:weaksubellipticinequality} is satisfied as a consequence of 
\eqref{E:second order equations}. See  \cite{TO87}, for details.
Using Theorem \eqref{T:LP bound for phi}, Lemma \eqref{L:LPboundoncurvature},
the hypothesis that $\phi \in H^{1,2}(B(1)- \{0 \} )$, and the gradient estimate of Lemma \eqref{L:subellipticlemma}, we conclude that $\phi $ satisfies the hypothesis of Morrey's Theorem (Theorem 5.3.1 of \cite{CM66}).Thus, $\phi$ is bounded on compact subdomains of $B(1)$.

Since, $ F \in L^{p}(B(1))$, for $ p > n/2$, we can make a continuous gauge transformation in $B(1)-\{0 \}$ to a gauge \cite{KKU82}, in which, 
\begin{align}\label{E:goodgaugeeq}
d * A =0 \\
\Vert A \Vert_{H^{1,p}} \leq C \Vert F \Vert_{L^{p}}, \quad p > n/2
\end{align}
In this gauge, we write the field equations \eqref{E:second order equations} and
\eqref{E:definecurrent} in terms of $A$, and obtain an elliptic system
\begin{align}\label{E:ellipticsystemforA}
d *A =0 \\
(\delta d + d \delta) A = J(\phi) - (1/2) \delta [A,A]- *[A,*F]
\end{align}
Using the relation $ F= dA + (1/2) [A,A]$, where $[\quad , \quad ]$, denotes the Lie Bracket, it is easy to show that $A$ is bounded, by using the Morrey's theorem that we previously applied to $ \phi $, with improvements given in
\cite{PS90} to weaken the condition $A \in H^{1,2}(B(1))$. (See also \cite{BGJS81}.)
Arguments exactly analogous to those at the end of Section 3 of \cite{TO87} complete the
proof.
 \end{proof}
 
 \appendix
 \section{Fix of estimate of line -9 page 512 of \cite{PS90} } 
  There is an arithmetic error in line -7 of page 512 of \cite{PS90}.The $1+\epsilon$ exponent on the right hand side there should be $1- \epsilon$.
 This falls just short of the power needed to establish the holonomy decay estimate of line -9 page 512 of \cite{PS90}.
  In order to
 obtain the estimate of line -9 page 512 of \cite{PS90}, we use instead the following argument of Karen Uhlenbeck \cite{KKU2017}.
 Recall Karcher's holonomy estimate of line 10 page 568 of \cite{PS90}.
 \begin{definition}
 Let $F$ be the Lie Algebra valued curvature 2--form of the connection $A$.
 Let $F_{x,y}dx dy$ be the representation of $F$ in orthogonal "rectangular" co-ordinates on the base.
 Let $F_{r, \theta} rdr d \theta$ be the representation of $F$ in polar co-ordinates on the base.
 We define
 \begin{equation}
 f(r) \colon = \int_{0}^{2\pi} \vert 	F_{r, \theta} \vert  \, d \theta
 \end{equation}
 \end{definition}
 
 We will need the following well known fact.
 
 Let $f \colon [a,b] \rightarrow R $. Then, $f \in W^{1,1}[a,b] $ iff $f$ is absolutely continuous.
 To prove this, we first need ( We denote weak derivatives by using $\frac{D}{Dx}$ instead of $\frac{d}{dx}$, replacing $x$ by ${t}$ as needed.)
 \begin{lemma}\label{L:weakderivzeroimpliesconstant}
 Let $f \in W^{1,1}[a,b]$, 
 and, let the weak derivative $ \frac{Df}{Dx} =0$. Then, $f$ is constant.
 (i.e. f has a constant representative in $ L^{1}[a,b]$.)
 \end{lemma}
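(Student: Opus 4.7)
The plan is to use the standard duality characterization of the weak derivative: saying that $Df/Dx = 0$ in the distributional (weak) sense means exactly that
\[
\int_a^b f(x)\,\varphi'(x)\, dx = 0 \qquad \text{for every } \varphi \in C_c^\infty(a,b).
\]
So the whole task is to extract from this annihilation condition the conclusion that $f$ is a.e.\ equal to a constant.

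The key observation is that the set $\{\varphi' : \varphi \in C_c^\infty(a,b)\}$ coincides with the set of test functions of mean zero, $\{\psi \in C_c^\infty(a,b) : \int_a^b \psi = 0\}$. Indeed, given any such $\psi$, the antiderivative $\varphi(x) := \int_a^x \psi(t)\, dt$ lies in $C_c^\infty(a,b)$ (it vanishes near $a$ because $\psi$ does, and near $b$ because $\psi$ has total integral zero), and $\varphi' = \psi$. Consequently,
\[
\int_a^b f(x)\,\psi(x)\, dx = 0 \qquad \text{whenever } \psi \in C_c^\infty(a,b) \text{ with } \int_a^b \psi = 0.
\]

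Now I would fix once and for all an auxiliary $\eta \in C_c^\infty(a,b)$ with $\int_a^b \eta = 1$ and set $c_0 := \int_a^b f\,\eta \in \mathbb{R}$. For an arbitrary $\varphi \in C_c^\infty(a,b)$, the function $\psi := \varphi - \bigl(\int_a^b \varphi\bigr)\eta$ is a test function of mean zero, so the displayed identity yields
\[
\int_a^b f\,\varphi\, dx \;=\; \Bigl(\int_a^b \varphi\Bigr)\,c_0 \;=\; \int_a^b c_0\,\varphi\, dx.
\]
Thus $\int_a^b (f - c_0)\,\varphi\, dx = 0$ for every $\varphi \in C_c^\infty(a,b)$, and the fundamental lemma of the calculus of variations (which applies since $f - c_0 \in L^1_{\mathrm{loc}}$) gives $f = c_0$ almost everywhere on $[a,b]$.

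The one point requiring care, and which I would expect to be the main (though mild) obstacle, is the justification that the auxiliary mean-zero test function $\psi$ is admissible in the weak-derivative relation; this reduces to checking that $f \in L^1[a,b]$ integrates against bounded compactly supported functions, which is immediate from the hypothesis $f \in W^{1,1}[a,b] \subset L^1[a,b]$. No mollification or integration by parts on $f$ itself is needed, which is why the proof works directly in the $W^{1,1}$ setting without any smoothness assumption on $f$.
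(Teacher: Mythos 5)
Your proof is correct, but it takes a genuinely different route from the paper's. The paper argues by mollification: it forms $f_{\epsilon}$, uses the identity $\frac{d f_{\epsilon}}{dx} = \left(\frac{Df}{Dx}\right)_{\epsilon} = 0$ to conclude each $f_{\epsilon}$ is constant, and then passes to the limit using the fact that the one-dimensional subspace of constants is closed in $L^{1}$. You instead give the classical du Bois--Reymond argument: you characterize $\{\varphi' : \varphi \in C^{\infty}_{c}(a,b)\}$ as the mean-zero test functions, subtract off the projection $c_{0} = \int f\eta$ against a fixed unit-mass test function $\eta$, and invoke the fundamental lemma of the calculus of variations to get $f = c_{0}$ a.e. Both are complete and standard. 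Your version has the minor advantage of working in a single step on all of $(a,b)$, whereas the mollification route technically only defines $f_{\epsilon}$ on a shrunken interval $[a+\epsilon, b-\epsilon]$ and so really concludes constancy on compact subintervals first before exhausting $(a,b)$ -- a point the paper glosses over. The paper's approach, on the other hand, uses no auxiliary normalization and fits the mollifier machinery it already has in play. Either proof is acceptable here; there is no gap in yours.
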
	
 \begin{proof}
For any $\epsilon > 0$, let $ f_{\epsilon}$ be the mollifier of $f$(with compact support compactly contained in $[a,b]$), with mollification parameter $\epsilon$. Then,
\begin{equation}
\frac{df_{\epsilon}}{dx} = \frac{Df_{\epsilon}}{Dx}= 0
\end{equation}
Thus, for each $\epsilon >0 $, we have that $f_{\epsilon}$ is constant on $[a,b]$.
Moreover, $f_{\epsilon} \overset{L^{1}}{\rightarrow} f$. 
But, the space of constant functions on $[a,b]$ is a one dimensional subspace of $L^{1}[a,b]$, hence, it is closed with respect to $L^{1}$ convergence.
Thus, f is constant on $[a,b]$.
\end{proof}
\begin{lemma}\label{L:equivalence}
 Let $f \colon [a,b] \rightarrow \mathbb{R} $. Then, $f \in W^{1,1}[a,b] $ iff $f$ is absolutely continuous.
\end{lemma}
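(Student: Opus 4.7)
The plan is to prove both implications of Lemma \ref{L:equivalence} using standard real-analysis techniques, with the forward direction leaning heavily on Lemma \ref{L:weakderivzeroimpliesconstant}.

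For the ``$\Leftarrow$'' direction, I would assume $f$ is absolutely continuous on $[a,b]$. Then classical real analysis (Lebesgue differentiation theorem for AC functions) gives that $f$ is differentiable a.e., its pointwise derivative $f'$ lies in $L^{1}[a,b]$, and the fundamental theorem of calculus
\[
f(x)-f(a)=\int_{a}^{x}f'(t)\,dt
\]
holds. For any test function $\varphi\in C_{c}^{\infty}(a,b)$, integration by parts (valid for AC functions as a consequence of the FTC and Fubini) yields
\[
\int_{a}^{b} f(t)\varphi'(t)\,dt = -\int_{a}^{b} f'(t)\varphi(t)\,dt.
\]
This identifies the pointwise derivative as the weak derivative and puts $f$ together with $Df/Dx=f'\in L^{1}$, so $f\in W^{1,1}[a,b]$.

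For the ``$\Rightarrow$'' direction, suppose $f\in W^{1,1}[a,b]$ with weak derivative $g:=Df/Dx\in L^{1}[a,b]$. Define
\[
F(x):=\int_{a}^{x} g(t)\,dt.
\]
Absolute continuity of the Lebesgue integral shows immediately that $F$ is absolutely continuous on $[a,b]$. By the ``$\Leftarrow$'' direction already established, $F\in W^{1,1}[a,b]$ with weak derivative $DF/Dx = g$ a.e. Consequently $h:=f-F$ lies in $W^{1,1}[a,b]$ and has weak derivative $Dh/Dx = g-g = 0$. By Lemma \ref{L:weakderivzeroimpliesconstant}, $h$ is (represented by) a constant $c$, so $f = F + c$ a.e. Since $F+c$ is absolutely continuous, $f$ has an absolutely continuous representative.

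I expect the main obstacle to be the FTC step in the ``$\Leftarrow$'' direction: justifying $\int_{a}^{x}f'(t)\,dt = f(x)-f(a)$ for AC $f$ requires the nontrivial fact that AC functions send null sets to null sets (equivalently, they satisfy Luzin's condition (N)), which is what rules out Cantor-type counterexamples where $f$ is merely continuous and differentiable a.e. with $f'\in L^{1}$. I would invoke this as a standard result from real analysis (e.g.\ Royden or Folland) rather than reprove it, and otherwise the argument is routine.
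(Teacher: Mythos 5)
Your proposal is correct and follows essentially the same route as the paper: integration by parts for AC functions (the paper realizes it by applying the FTC to the absolutely continuous product $\phi f$) for the ``$\Leftarrow$'' direction, and for ``$\Rightarrow$'' the antiderivative $F(x)=\int_a^x (Df/Dt)\,dt$ of the weak derivative, the already-proved ``$\Leftarrow$'' direction, and Lemma \ref{L:weakderivzeroimpliesconstant} to conclude $f-F$ is constant. Your explicit flagging of Luzin's condition (N) as the nontrivial input behind the FTC for AC functions is a point the paper leaves implicit, but the argument is otherwise the same.
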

\begin{proof}
$\Leftarrow$
Let $\phi \in C^{\infty}_{0}[a,b]$.
 $f$ absolutely continuous in $[a,b]$ 
 $\Leftrightarrow$ $\phi f$ 
 absolutely continuous in $[a,b]$, hence, the classical derivatives $\frac{df}{dx}$, and $\frac{d \phi f}{dx}$ exist a.e. in $[a,b]$. Thus,
 \begin{align}
 	0=\phi(b) f(b) - \phi(a) f(a) &= \\
 	\int_{a}^{b} \frac{d \phi f}{dx} \, dx 
 	&= \int_{a}^{b} \phi \frac{df}{dx} \, dx+ \int_{a}^{b} f \frac{d \phi}{dx} \, dx
 \end{align}
 , thus
 \begin{equation}
 \int_{a}^{b} f \frac{d \phi}{dx} \, dx = -
 \int_{a}^{b} \phi \frac{df}{dx} \, dx,\quad \forall \phi \in C^{\infty}_{0}[a,b]
\end{equation}
, which implies
 \begin{equation}
	\frac{df}{dx}= \frac{Df}{Dx}\text{ a.e. in $[a,b]$}
	\end{equation}
$\Rightarrow$ Let $f \in W^{1,1}[a,b]$.
Then,
\begin{equation}
	w(x) \colon = \int_{a}^{x} \frac{Df}{Dt} \, dt
	\end{equation}	
	Note $w$ is absolutely continuous in $[a,b]$, and, by the $\Leftarrow$ part of the current lemma, just proved, we have that $w$ has a classical derivative  $\frac{dw}{dx}$ which coincides with its weak derivative
	$\frac{DW}{Dx}$ a.e. in $[a,b]$.
Thus,
\begin{equation}
\frac{Dw}{Dx} =\frac{dw}{dx} = \frac{du}{dx} 
	\end{equation}
	, and thus,
	\begin{equation}
	\frac{D (u - w) }{dx}=0
		\end{equation}
	This implies, by Lemma \ref{L:weakderivzeroimpliesconstant}, that $u-w $ is constant on $[a,b]$. 
	Since, $w$ is absolutely continuous, so is $u$.
	In fact, when $x=a$, we have $u(a) = c+ w(a) =c $, which implies
	$u(x) = u(a) + \int_{a}^{b} \frac{Du}{Dt} \, dt  $
	\end{proof}

 \begin{lemma}\label{UhlenbeckCalculusArgument}
 If $F \in W^{1 , p}(B_{R_{0}}) $, for any $1 < p < 1.5$ then $ esslim_{r \rightarrow 0} \quad (\alpha^{'}(r)) =0$ \quad
 \end{lemma}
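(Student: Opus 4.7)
The plan is to identify, via the Karcher holonomy estimate recalled just above, $\alpha'(r) = f(r)$ (or at least $|\alpha'(r)|\leq C f(r)$) and then prove $\operatorname{esslim}_{r\to 0^+}f(r)=0$. The strategy has two steps: (a) show $f\in W^{1,1}(0,R_0)$, so that by Lemma \ref{L:equivalence} $f$ has an absolutely continuous representative and the limit $L:=\lim_{r\to 0^+}f(r)$ exists and is finite; (b) show $\int_0^{R_0} f(r)/r\,dr<\infty$, which will force $L=0$.

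For step (a), I would use the polar identity $F_{r,\theta}=rF_{xy}$ (coming from $dx\wedge dy=r\,dr\wedge d\theta$) to compute the a.e.\ pointwise inequality $|\partial_r F_{r,\theta}| \leq |F_{r,\theta}|/r + r|\nabla F|$, and then, from the Lie-algebra norm inequality $|\partial_r|v||\leq|\partial_r v|$, derive
\[
|f'(r)| \leq \frac{f(r)}{r} + r\int_0^{2\pi}|\nabla F(r,\theta)|\,d\theta \quad \text{a.e. in } r.
\]
The second summand integrates over $(0,R_0)$ to $\int_{B_{R_0}}|\nabla F|\,dA \leq C\|\nabla F\|_{L^p(B_{R_0})}<\infty$ by H\"older and the hypothesis $F\in W^{1,p}$.

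For the first summand, I would exploit the two-dimensional Sobolev embedding $W^{1,p}\hookrightarrow L^q$ with $q=2p/(2-p)$; the condition $p>1$ is exactly what forces $q>2$ (equivalently $q'<2$), so that by H\"older
\[
\int_0^{R_0}\frac{f(r)}{r}\,dr = \int_{B_{R_0}}\frac{|F|}{r}\,dA \leq \|F\|_{L^q}\Bigl(2\pi\textstyle\int_0^{R_0}r^{1-q'}\,dr\Bigr)^{1/q'} <\infty.
\]
Combining these bounds, $f'\in L^1(0,R_0)$; together with $f\in L^1(0,R_0)$ (a consequence of $\int_0^{R_0}rf(r)\,dr=\|F\|_{L^1(B_{R_0})}<\infty$), this places $f\in W^{1,1}(0,R_0)$, so Lemma \ref{L:equivalence} delivers the absolute continuity and the existence of $L\in[0,\infty)$.

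For step (b), suppose toward a contradiction that $L>0$. Then $f(r)\geq L/2$ on some interval $(0,\delta)$, giving $\int_0^\delta f(r)/r\,dr \geq (L/2)\int_0^\delta dr/r=\infty$, which contradicts the finiteness of $\int_0^{R_0} f(r)/r\,dr$ established above. Hence $L=0$, and since the representative of $f$ is continuous with limit $0$ at the origin, $\operatorname{esslim}_{r\to 0^+}\alpha'(r)=\operatorname{esslim}_{r\to 0^+} f(r)=0$. The main technical obstacle will be the rigorous justification of the pointwise a.e.\ inequality for $f'(r)$: one must invoke the ACL characterization of $W^{1,p}$ so that radial derivatives of $F_{xy}$ are classical for a.e.\ $\theta$, commute differentiation with the $\theta$-integral via Fubini, and use $|\nabla|F||\leq|\nabla F|$. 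The upper bound $p<1.5$ is not strictly needed for the scheme above and presumably reflects the regularity actually available for $F$ from the context of \cite{PS90}.
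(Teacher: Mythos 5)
Your argument is correct, and it reaches the conclusion by a genuinely different route than the paper's. Both proofs start from the same two ingredients --- Karcher's holonomy formula, which gives $\vert\alpha'(r)\vert\leq r\int_0^{2\pi}\vert F_{xy}(r,\theta)\vert\,d\theta$, and Lemma~\ref{L:equivalence} to pass from a $W^{1,1}$ bound in $r$ to absolute continuity --- but they diverge after that. The paper bounds $\int_r^{r_0}\vert f'(\rho)\vert\,d\rho$ by a weighted H\"older inequality against $\bigl(\int\vert f'\vert^p\rho\,d\rho\bigr)^{1/p}$, obtains a quantitative power-law growth bound on $f(r)$ as $r\to 0$, and kills it by the explicit factor of $r$ in the holonomy formula; it never shows that the limit of the relevant quantity \emph{exists} except through this rate. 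You instead work directly with the quantity $rf(r)$ that controls $\alpha'$, show it lies in $W^{1,1}(0,R_0)$ (so its limit at the origin exists), and then use the separate integrability statement $\int_0^{R_0}f(r)\,dr=\int_{B_{R_0}}\vert F\vert/r\,dA<\infty$ --- obtained from the two-dimensional Sobolev embedding $W^{1,p}\hookrightarrow L^{2p/(2-p)}$ with $2p/(2-p)>2$, which the paper does not invoke --- to force that limit to be zero. Your scheme buys two things: it works for all $1<p<2$ rather than $1<p<1.5$, and it sidesteps the exponent bookkeeping in the paper's display \eqref{E:basicinequalityforfprime}--\eqref{E:limitofrf(r)}, where the outer power $\tfrac{p-1}{p}$ on the $\rho^{1/(1-p)}$ integral appears to have been dropped (as written, with $1<p<1.5$ the term $r^{(3-2p)/(1-p)}$ has a negative exponent and does not tend to zero; carrying the power correctly one gets $r^{2(p-1)/p}\to 0$ for all $1<p<2$, so the paper's approach is also repairable without the upper restriction on $p$). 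Two small points to tidy in your write-up: your convention $F_{r,\theta}=rF_{xy}$ is the opposite of the paper's (the paper writes $F=F_{r,\theta}\,r\,dr\,d\theta$, so its $F_{r,\theta}$ equals $F_{xy}$ and your $f$ is $r$ times the paper's $f$ --- harmless, but it should be stated); and your justification of $f\in L^1(0,R_0)$ via ``$\int_0^{R_0}rf(r)\,dr=\Vert F\Vert_{L^1}$'' has a stray factor of $r$ under your convention --- in fact $\int_0^{R_0}f(r)\,dr=\Vert F\Vert_{L^1(B_{R_0})}$ directly, and in any case $f/r\in L^1$ already gives $f\in L^1$ on a bounded interval, so the conclusion stands.
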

 \begin{proof}
 Let $0< r_{0} < R_{0} < 1 $. \\
 Since, $F \in W^{1 , p}(B_{R_{0}})$, we have, using the integral version of Minkowski's inequality, and Holder's inequality, that
 \begin{equation}\label{E:onedimsobolev}
 \int_{0}^{r_{0}} \vert f(\rho) \vert^{p} \rho \, d \rho + \int_{0}^{r_{0}} \vert f^{'}(\rho) \vert^{p} \rho \, d \rho \leq M^{2} < \infty
 	\end{equation}

 For any $r$, with $0 < r \leq r_{0} $, we have that, using Holder's inequality
 \begin{equation}\label{E:basicinequalityforfprime}
  \begin{split}
 \int_{r}^{r_{0} } \vert f^{'} (\rho) \vert \, d \rho  =
\int_{r}^{r_{0} } \left[\vert f^{'} (\rho) \vert  \rho^{p} \right]
\left[ \rho^{-p} \right] \, d \rho  \leq \\
\left[ \int_{r}^{r_{0} } \vert f^{'}  (\rho) \vert^{p}  \rho^{1} \, d \rho \right]^{1/p}
\left[ \int_{r}^{r_{0}} \rho^{ \frac{1}{1- p} } \, d \rho \right]^{\frac
{p-1}{p}} \leq \\
   M^{1/p} \left[ 
(r_{0}) ^{ \frac {2-p}{1-p}}  
- (r)^{\frac{2-p}{1-p} }
\right]
\end{split}
 \end{equation}	
 
 Note, that $f$ is absolutely continuous, since $f \in W^{1,1}(0,1) $, by \ref{E:onedimsobolev}. Thus, 
 \begin{equation}
 f({r_{0}) = f(r) + \int_{r}^{r_{0}} f^{'} (\rho}) \, d \rho
\end{equation}
\begin{equation}\label{E:firstintegralestimateforf}
	f(r) \leq f(r_{0}) +   \int_{r}^{r_{0}} \vert f^{'} (\rho) \vert \, d \rho
\end{equation}
Combining \ref{E:firstintegralestimateforf}, and \ref{E:basicinequalityforfprime}, we obtain
\begin{equation}\label{E:secondintegralestimateforf}
f(r) \leq f(r_{0}) + 
 M^{1/p} \left[ 
(r_{0}) ^{ \frac {2-p}{1-p}}  
- (r)^{\frac{2-p}{1-p} }
\right]
\end{equation}
, and thus
\begin{equation}\label{E:limitofrf(r)}
lim_{r \rightarrow 0} r f(r) \leq  lim_{r \rightarrow 0} r f(r_{0}) + lim_{r \rightarrow 0}
r M^{1/p} \left[ 
(r_{0}) ^{ \frac {2-p}{1-p}} \right] 
- lim_{r \rightarrow 0 }(r)^{\frac{3-2p}{1-p} }
=0
\end{equation}
, where we have used $1 < p < \frac{3}{2}$

Now, recall from Karcher's holonomy formula (line 10 page 568 of \cite{PS90}, using the notation there ) that
\begin{equation}
g(r_{2}) - g(r) = \int_{r}^{r_{2}} \int_{0}^{2 \pi} F_{\rho, \theta} \,  \rho  \, d \theta d \rho
\end{equation}
,and thus
\begin{equation}\label{E:basicgestimate}
\vert g(r_{2})\vert  - \vert g(r) \vert  \leq
\vert \vert g(r_{2})\vert  - \vert g(r) \vert \vert \leq \vert g(r_{2}) - g(r) \vert \leq
\int_{r}^{r_{2}} \int_{0}^{2 \pi} \vert F_{\rho, \theta} \vert \,  \rho \, d \theta d \rho
\end{equation}
Dividing both sides of \ref{E:basicgestimate} by $r_{2}-r $, we obtain
\begin{equation}\label{E:fracgestimate}
\frac{\vert g(r_{2})\vert  - \vert g(r) \vert}{r_{2} -r}  
\leq 
\frac{1}{r_{2} -r} 
\int_{r}^{r_{2}}  f(\rho) \rho \,  d \rho
\end{equation}

Now, letting $ r_{2} \rightarrow r$, in \ref{E:fracgestimate}, and using the absolute continuity of $f$, we obtain
\begin{equation}\label{E:holonomyderivestimate}
g^{'}(r) \leq r f(r)
\end{equation}
Using \ref{E:holonomyderivestimate}, combined with \ref{E:limitofrf(r)}, we finally obtain
\begin{equation}\label{E:holonomyderivativelimit}
\lim_{r \rightarrow 0} g^{'}(r) = 0
\end{equation}
and this proves the required estimate of line -9 page 512 of \cite{PS90}
\end{proof}

\end{document}